\theoremstyle{Theorem}
\newtheorem{theorem}{Theorem} [section]
\newtheorem{proposition}[theorem]{Proposition}
\theoremstyle{definition}
\newtheorem{definition}[theorem]{Definition}
\newtheorem{remark}[theorem]{Remark}
\theoremstyle{remark}
\newlist{enumlemma}{enumerate}{3}
\setlist[enumlemma]{label*={(\alph*)}, ref= {(\alph*)} }
\newcommand{\diff}{\mathrm{Diff}}
\renewcommand{\epsilon}{\varepsilon}
\DeclareMathOperator{\Diff}{Diff}
\newcommand{\R}{\mathbb {R}}
\newcommand{\Z}{\mathbb {Z}}
\newcommand{\e}{\epsilon}
\title{Examples of diffeomorphism group cocycles with no periodic approximation}
\author[Sebastian Hurtado]{Sebastian Hurtado}
\address{University of Chicago, Chicago, IL 60637, USA}
\email{shurtados@uchicago.edu}
\long\def\symbolfootnote[#1]#2{\begingroup\def\thefootnote{\fnsymbol{footnote}}
\footnote[#1]{#2}\endgroup}
\begin{document}

\begin{abstract}
 
We construct a finitely generated subgroup of $\diff^{\infty}(\mathbb{S}^3 \times \mathbb{S}^1)$ where every element is conjugate to an isometry but such that the group action itself is far from isometric (the group has ``exponential growth of derivatives"). As a corollary, one obtains a locally constant $\diff^{\infty}(\mathbb{S}^3 \times \mathbb{S}^1)$ valued cocycle over a hyperbolic dynamical system which has elliptic behavior over its periodic orbits but which preserves a measure with non-zero top Fiber Lyapunov exponent. Additionally, we provide new examples of Banach cocycles not satisfying the periodic approximation property as first shown in \cite{KalSad}.

\end{abstract}

\maketitle

\begin{section}{Introduction}

For a hyperbolic dynamical system $f: X \to X$ (e.g. Anosov diffeomorphisms, shift on $k$-symbols, etc.) to understand when some property holds it is often enough to check the property over the periodic orbits of $f$, for example, certain Anosov diffeomorphisms preserve a $C^0$-volume form if and only if the Jacobian satisfies $|\text{Jac}_p(f^n)| = 1$ for every periodic point $p \in X$ of period $n$. This is partly a consequence of the fact that any $f$-invariant ergodic probability measure on $X$ can be approximated by a measure supported on a periodic orbit, which comes from the Anosov closing Lemma, see \cite{KatHas}, Sec. 19.2.\\

In the context of group cocycles over hyperbolic systems similar facts are known to be true. Recall that a $G$-cocycle is a continuous function $\mathcal{A}: X \to G$ where $G$ is a topological group. Kalinin showed that for a linear cocycle  (meaning $G = GL_n(\R)$) over a hyperbolic dynamical system the following holds: For any $f$-invariant ergodic probability measure $\mu$ on $X$ with corresponding $\mathcal{A}$-Lyapunov exponents $\chi^{\mu}_1 \leq \chi^{\mu}_2 \leq \dots \leq \chi^{\mu}_n$, there is an $f$-invariant probability measure $\nu$ supported over a periodic orbit in $X$ whose corresponding Lyapunov exponents $\chi^{\nu}_1 \leq \chi^{\nu}_2 \leq  \dots \leq \chi^{\nu}_m$ satisfy $|\chi^{\mu}_i - \chi^{\nu}_i| < \e$ for every $i$. See \cite{Kal} for the definitions of the previous terms and a more thorough discussion about cocycles and Lyapunov exponents.\\  

Whether similar periodic approximation properties hold for more general groups is not well understood. For example, in the case when $G$ is the group of bounded operators of a Hilbert space, Kalinin-Sadovskaya (\cite{KalSad}) showed that such periodic approximation properties for the top Lyapunov exponent can fail.\\

Nonetheless, if $G = \diff(M)$ is the group of diffeomorphism of a low dimensional manifold $M$ (i.e. $\dim(M) = 1$ or $\dim(M) =2$ preserving volume) and Lyapunov exponents are replaced with Fiber-Lyapunov exponents, Kocsard-Potrie \cite{KocPot} showed a periodic approximation property similar to the one Kalinin proved holds.\\

In the context of smooth group actions on manifolds (where my own interest come from), diffeomorphism group cocycles appear naturally. For example, If $S = \{a_1,a_2,..a_m\}$  is generating set of a subgroup $\Gamma$ of $\diff(M)$, the shift map $\sigma: S^{\Z} \to S^{\Z}$ given by $\sigma((b_i)_{i \in \Z}) = (b_{i+1})_{i \in \Z}$ is a hyperbolic dynamical system and there is a canonical locally-constant $\diff(M)$-cocycle over $\sigma$ given by $\mathcal{A}(...a_{i_{-1}}a_{i_{0}}a_{i_{1}}...) = a_{i_{0}}$.\\

The periodic approximation property by Kocsard-Potrie has the following elementary consequence, see \cite{KocPot}, \cite{Hur}:

\begin{proposition}
If $\Gamma = \langle S \rangle$ is  a subgroup of $\Diff(\mathbb{S}^1)$ (or $\Diff_{\text{vol}}(\mathbb{S}^2)$) and there exists $\lambda > 0$ and a sequence of elements $w_n \in \Gamma$ such that $\|D(w_n)\| \geq e^{\lambda |w_n|_S}$, there exists $g \in \Gamma$ having a hyperbolic fixed point on $\mathbb{S}^1$ (or $\mathbb{S}^2$).
\end{proposition}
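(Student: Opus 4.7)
The plan is to use the sequence $(w_n)$ to construct a $\sigma$-invariant probability measure on $S^{\Z}$ whose top fiber Lyapunov exponent for the canonical cocycle $\mathcal{A}$ is at least $\lambda$, then apply the Kocsard-Potrie periodic approximation property to replace it by a measure supported on a periodic orbit of $\sigma$ whose exponent remains positive. The cocycle around such a periodic orbit is a concrete element $g\in\Gamma$, and positivity of the exponent will force $g$ to have a hyperbolic periodic orbit; a suitable power of $g$ then has a hyperbolic fixed point.

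For the measure construction I would work on the skew product $F(x,p)=(\sigma x,\mathcal{A}(x)(p))$ on $S^{\Z}\times M$, where $M=\mathbb{S}^1$ or $\mathbb{S}^2$. Writing $w_n=s_{i_{k_n}}\cdots s_{i_1}$ with $k_n=|w_n|_S$, choose $p_n\in M$ realizing $\|D_{p_n}w_n\|=\|D(w_n)\|$, let $x_n\in S^{\Z}$ be the periodic point of period $k_n$ whose fundamental block spells out this decomposition of $w_n$, and set $q_{n,j}=(s_{i_j}\circ\cdots\circ s_{i_1})(p_n)$. Consider the empirical measure
\[
\nu_n=\frac{1}{k_n}\sum_{j=0}^{k_n-1}\delta_{(\sigma^j x_n,\,q_{n,j})}.
\]
A direct computation gives $F_*\nu_n-\nu_n=\tfrac{1}{k_n}\bigl(\delta_{(x_n,w_n(p_n))}-\delta_{(x_n,p_n)}\bigr)$, so any weak-$*$ subsequential limit $\nu$ is a genuine $F$-invariant probability measure. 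Submultiplicativity of the operator norm together with the chain rule yield
\[
\int\log\|D_p\mathcal{A}(x)\|\,d\nu_n(x,p)=\frac{1}{k_n}\sum_{j=1}^{k_n}\log\|D_{q_{n,j-1}}s_{i_j}\|\ge\frac{1}{k_n}\log\|D(w_n)\|\ge\lambda,
\]
and continuity of the integrand passes this lower bound to $\nu$. Extracting an $F$-ergodic component and projecting to $S^{\Z}$ produces a $\sigma$-invariant ergodic measure $\mu$ whose top fiber Lyapunov exponent is at least $\lambda$.

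Applying Kocsard-Potrie to $\mu$ then provides a $\sigma$-invariant probability measure supported on a single periodic orbit of $\sigma$ whose top fiber Lyapunov exponent is still positive, say at least $\lambda/2$. The cocycle around this periodic orbit is a single element $g\in\Gamma$, and positivity of its fiber exponent means $\lim_{m\to\infty}\tfrac{1}{m}\log\|D(g^m)\|>0$; by the sub-additive variational principle there is an ergodic $g$-invariant probability measure $\rho$ on $M$ with $\int\log\|Dg\|\,d\rho>0$.

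To finish, I would upgrade this to a hyperbolic fixed point. On $\mathbb{S}^1$, Denjoy's theorem rules out the fixed-point-free case (which would force every invariant measure to have zero exponent), so $\rho$ must be supported on a genuine periodic orbit of some period $m$, and $\int\log|g'|\,d\rho=\tfrac{1}{m}\log|(g^m)'(p)|>0$ exhibits a hyperbolic fixed point of $g^m$. On $\mathbb{S}^2$ in the area-preserving setting, the same conclusion follows from Katok's horseshoe theorem, using that positive exponent implies positive entropy via Pesin's formula. The principal technical difficulty is the very first step: the hypothesis $\|D(w_n)\|\ge e^{\lambda k_n}$ is a one-step quantity, whereas fiber Lyapunov exponents are asymptotic, so one must convert single-iterate derivative growth into a genuine invariant-measure statement — this is precisely the role played by the almost-invariance estimate $\|F_*\nu_n-\nu_n\|_{\mathrm{TV}}\le 2/k_n$ combined with weak-$*$ compactness.
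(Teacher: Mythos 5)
Your overall architecture — build an $F$-invariant measure with positive top fiber exponent from the sequence $(w_n)$, invoke Kocsard--Potrie to transfer positivity to a periodic orbit of the shift, then convert the resulting single element $g$ with positive exponent into a hyperbolic periodic point — is exactly the intended route (the paper states this proposition without proof, citing \cite{KocPot} and \cite{Hur}). However, your first step has a real gap: the estimate you prove for the limit measure $\nu$ is $\int\log\|D_p\mathcal{A}(x)\|\,d\nu\geq\lambda$, i.e.\ a lower bound on the integral of the \emph{one-step} log-norm. By subadditivity the top fiber Lyapunov exponent satisfies $\chi(\nu)=\inf_m\frac{1}{m}\int\log\|D_p\mathcal{A}^{(m)}(x)\|\,d\nu\leq\int\log\|D_p\mathcal{A}(x)\|\,d\nu$, so your inequality bounds $\chi(\nu)$ from \emph{above}, not below; a cocycle whose generators individually expand but whose products cancel (e.g.\ $s_2=s_1^{-1}$) shows the implication you need is false. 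The standard repair is to run your empirical-measure estimate for every fixed block length $m$: using $\log\|D_pA^{(k_n)}\|\leq\sum\log\|D A^{(m)}\|$ over shifted blocks (averaged over the $m$ starting phases, with an $O(m)$ boundary error), one gets $\frac{1}{m}\int\log\|D\mathcal{A}^{(m)}\|\,d\nu\geq\lambda$ for all $m$, hence $\chi(\nu)\geq\lambda$. Equivalently, apply the subadditive variational principle (Schreiber, Sturman--Stark, Morris) to $m\mapsto\sup_{(x,p)}\log\|D_p\mathcal{A}^{(m)}(x)\|$, which your hypothesis bounds below by $\lambda m$ along $m=k_n\to\infty$. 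You invoke exactly this principle later for $g$ alone, so the tool is in your hands; it is simply needed at the first step, where you instead used only the $m=1$ case.

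Two smaller points. First, on $\mathbb{S}^2$ the assertion that ``positive exponent implies positive entropy via Pesin's formula'' is incorrect (Ruelle's inequality goes the other way, and a point mass at a hyperbolic fixed point has a positive exponent and zero entropy). What you actually need is that area-preservation forces $\chi_1+\chi_2=\int\log|\det Dg|\,d\rho=0$, so a positive top exponent makes $\rho$ a hyperbolic ergodic measure, and Katok's closing lemma for $C^{1+\alpha}$ surface diffeomorphisms then yields a hyperbolic periodic point whether or not $\rho$ is atomic or has entropy. Second, you should record that $\|D(w_n)\|$ is the supremum over both $Dw_n$ and $Dw_n^{-1}$ (replace $w_n$ by $w_n^{-1}$ if needed) and that $|w_n|_S\to\infty$ (which follows from $\|Dg\|\leq K^{|g|_S}$), since both your almost-invariance bound $2/k_n$ and the variational-principle step require it. Your $\mathbb{S}^1$ endgame via the rotation number/unique ergodicity argument is correct.
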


\begin{remark}
The notation $|g|_S$ denotes word length of $g$ with respect to the generating set $S$. For a fixed choice of Riemmanian metric on $M$, the norm $\|D(g)\|$ denotes the supremum over $x \in M$ of the norms of derivative maps $D_x(g)$ and $D_x(g^{-1})$.
\end{remark}

The purpose of this note is to show that such periodic approximation property (and the previous proposition) does no longer hold if $\dim(M) \geq 4$, to state the main theorem we make the following definition:\\

\begin{definition}
 A finitely generated subgroup $\Gamma = \langle S \rangle$ of $\diff(M)$ has sub-exponential growth of derivatives if for any $\e>0$, 
there exists $C_\e$ such that for any element $g \in \Gamma$: $$\|D(g)\| \leq C_{\e}e^{\e |g|_S}$$

If this does not hold, we say $\Gamma$ has exponential growth of derivatives.

\end{definition}

\begin{remark}
The condition of having sub-exponential growth of derivatives does not depend on the generating set. 
If $g$ is a diffeomorphism, we say that $g$ has sub-exponential growth of derivatives if the cyclic group $\langle g \rangle$ has sub-exponential growth of derivatives. 
\end{remark}

This concept was key in the work of the the author in the Burnside problem on diffeomorphism groups \cite{Hur} and the work of Brown-Fisher-Hurtado \cite{BFH} in the Zimmer program. Our main result is the following:

\begin{theorem}\label{main2} There exists a finitely generated group $\Gamma \subset \diff(M)$ on the 4-dimensional manifold $ M = \mathbb{S}^3\times \mathbb{S}^1 $  such that:
\begin{enumerate}
\item The group $\Gamma$ has exponential growth of derivatives.
\item Every element $g \in G$ preserves a Riemannian metric $m_g$ on $M$ (which must depend on $g$).
\end{enumerate}
\end{theorem}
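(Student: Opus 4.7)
I would realize $\Gamma$ as a subgroup of the skew-rotation subgroup $\mathcal G\subset\Diff(M)$ consisting of maps $F_{\phi,\alpha}(x,t)=(\phi(t)\cdot x,\,t+\alpha)$, after identifying $\mathbb S^3\cong\mathrm{SU}(2)$; here $\phi\in C^\infty(\mathbb S^1,\mathrm{SU}(2))$ and $\alpha\in\R$, with composition law $F_{\phi_1,\alpha_1}\circ F_{\phi_2,\alpha_2}=F_{\phi_1(\cdot+\alpha_2)\phi_2(\cdot),\,\alpha_1+\alpha_2}$. Two elementary observations drive the construction: (a) $F_{\phi,0}$ preserves the product metric for free, as it acts fiberwise by left multiplication in the isometry group $\mathrm{SU}(2)$; (b) $F_{\phi,\alpha}$ is $\Diff(M)$-conjugate to the isometry $F_{e,\alpha}$ precisely when $\phi$ is an $\alpha$-coboundary in the loop group, $\phi(t)=C(t+\alpha)C(t)^{-1}$ for some smooth $C\colon\mathbb S^1\to\mathrm{SU}(2)$; the conjugator is $H_C(x,t)=(C(t)^{-1}x,\,t)$, and $F_{\phi,\alpha}$ then preserves $(H_C)_{*}m_{\mathrm{prod}}$.

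\textbf{Reduction to a cohomological problem.} I would pick a Diophantine irrational $\alpha$ and generators of two kinds: an isometry $a=F_{e,\alpha}$, and finitely many loop generators $b_j=F_{\psi_j,0}$, where the $\psi_j$ are chosen so that their pointwise values generate a free noncommutative subgroup of $\mathrm{SU}(2)$. Using $a\,b_j\,a^{-1}=F_{\psi_j\circ\tau_{-\alpha},\,0}$, one puts every element $w\in\Gamma$ into the normal form $F_{\Psi_w,\,N_w\alpha}$, where $N_w\in\Z$ is the signed $a$-exponent of $w$ and $\Psi_w$ is an explicit product of shifted $\psi_j$'s determined by the word. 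When $N_w=0$ the word preserves the product metric by (a); when $N_w\neq 0$ its ellipticity is equivalent by (b) to the cohomological equation $\Psi_w(t)=C_w(t+N_w\alpha)C_w(t)^{-1}$ having a smooth solution $C_w$. The construction must therefore be arranged so that this equation is solvable for every word --- for instance by selecting the $\psi_j$ themselves as $\alpha$-coboundaries of carefully correlated witnesses, so that $\Psi_w$ inherits the coboundary property through arbitrary noncommutative products, possibly with KAM-type linearization used to handle the remaining small divisors.

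\textbf{Growth of derivatives and the main obstacle.} The exponential growth of derivatives can \emph{not} arise from a single global conjugation to a group of isometries, since that would force the uniform bound $\|D(w)\|\leq\|DH\|^{2}$. Consequently the conjugators $C_w$ witnessing ellipticity must depend genuinely on $w$ and satisfy $\|DC_w\|\geq e^{\lambda|w|_S}$ along a sequence of words --- the mechanism for this being the noncommutativity of $\mathrm{SU}(2)$, under which nested commutators of $\psi_j$'s evaluated at translates $t+s\alpha$ compound multiplicatively in the Lie algebra rather than additively. The principal difficulty of the proof, which I expect to be its main obstacle, is realizing these two requirements simultaneously: arranging the infinite family of cohomological equations to be solvable for \emph{all} words (an ellipticity condition on each element) while forcing the solutions to explode exponentially in $C^1$-norm (a positive-entropy condition on the group). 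Reconciling these opposing demands uses the infinite-dimensional flexibility of the loop group $C^\infty(\mathbb S^1,\mathrm{SU}(2))$ together with fine Diophantine input on $\alpha$, and is precisely what makes the construction nontrivial and specific to manifolds of dimension at least four.
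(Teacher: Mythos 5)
Your proposal takes a genuinely different route from the paper, and unfortunately the route has a fatal structural flaw before one even reaches the cohomological difficulties you flag.

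\textbf{The fatal problem.} In your subgroup $\mathcal G=\{F_{\phi,\alpha}(x,t)=(\phi(t)\cdot x,\,t+\alpha)\}$ the fiber is $\mathbb S^3$ and the fiber maps are left translations by elements of the \emph{compact} group $\mathrm{SU}(2)$, hence isometries of the round fiber metric. The only source of derivative growth is the mixed term $\partial_t\bigl(\phi(t)x\bigr)=\phi'(t)x$, so $\|DF_{\phi,\alpha}\|\leq 1+\|\phi'\|_\infty$ up to constants. Under your composition law the loop of a product is $\phi_1(\cdot+\alpha_2)\phi_2(\cdot)$, and with the bi-invariant metric on $\mathrm{SU}(2)$ one gets $\|\partial_t(\phi_1(\cdot+\alpha_2)\phi_2)\|\leq\|\phi_1'\|+\|\phi_2'\|$, since left and right translations are isometries. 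By induction, any word $w$ of length $n$ in your generators satisfies $\|\partial_t\Psi_w\|\leq n\max_j\|\psi_j'\|$, hence $\|Dw\|=O(n)$. The entire group $\mathcal G$ therefore has sub-exponential (indeed linear) growth of derivatives, and condition (1) of the theorem can never be achieved inside it. The ``noncommutativity compounding multiplicatively in the Lie algebra'' you invoke does not happen: compactness of $\mathrm{SU}(2)$ and bi-invariance force additive accumulation. Separately, even the ellipticity of individual elements in your setup reduces to reducibility of an $\mathrm{SU}(2)$-cocycle over an irrational rotation, which you correctly identify as unresolved; it is generically false, and your suggestion of taking the $\psi_j$ to be correlated coboundaries is not developed into an argument.

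\textbf{What the paper does instead.} The roles of base and fiber are the opposite of yours, and this is exactly what makes it work. The base is $\mathbb S^3$ carrying a free subgroup $\langle a_1,\dots,a_m\rangle$ of $\mathrm{SU}(2)$, so the base action is isometric for free. The fiber is $\mathbb S^1$, and the fiber maps are $F(v)=f^1_xf^2_yf^3_zf^4_u$ for $v=(x,y,z,u)\in\R^4$, where the $f^i_t$ are commuting flows each with a hyperbolic fixed point; crucially $F$ is a homomorphism from the \emph{noncompact} group $\R^4$ into a group of non-isometric circle diffeomorphisms. The generators are $A_i(v,s)=(a_i(v),F(v)s)$. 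Iterating a word $W$ produces $F$ evaluated at a vector sum $\sum_k w^k(v)$ (plus shifts), which for a fixed rotation $w\neq e$ is a bounded geometric series because the eigenvalues satisfy $|\lambda|=1$, $\lambda\neq 1$ --- this gives ellipticity of every element with no cohomological equation to solve. Exponential growth comes from choosing a sequence of distinct words $w_n$ whose orbit of a fixed $v_0$ keeps returning to a small ball around $v_0$ (a compactness/ping-pong argument in $\mathrm{SU}(2)$), so that the accumulated sum is comparable to $nv_0$; pushing this through $F$ lands deep in the hyperbolic flow and yields derivative at least $2^{n/2}$ at the fixed point. The asymmetry between ``one element iterated $n$ times'' (bounded geometric sum) and ``$n$ different elements composed'' (linearly growing sum) is the whole point, and it requires the noncompact target $\R^4\to\diff(\mathbb S^1)$ that your construction lacks.
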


\begin{remark} To prove the existence of a Riemannian metric $m_g$, it is sufficient to show that for every $g \in G$, the sequence $\{\|Dg^n\|\}$ is bounded independent of $n$. The existence of $m_g$ then follows by averaging an arbitrary metric.
\end{remark}

As an immediate consequence, we obtain the following:

\begin{theorem}\label{main} There exists a locally constant cocycle  $\mathcal{A}: \Sigma \to \diff(M)$ such that the suspension $F: \Sigma \times M \to \Sigma \times M$ preserves a 
probability measure $\mu$ with non-zero top Fiber-Lyapunov exponent and such that every $F$-invariant measure $\nu$ supported over a periodic orbit of $\Sigma$ has zero fiber Lyapunov exponents.
\end{theorem}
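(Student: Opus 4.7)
The plan is to take $\Gamma\subset\diff(M)$ from Theorem~\ref{main2} and use it to build the canonical locally constant cocycle, deducing both properties from the two properties of $\Gamma$. Let $S$ be the generating set of $\Gamma$, put $\Sigma:=S^{\Z}$ with shift $\sigma$, define the locally constant $\mathcal{A}:\Sigma\to\diff(M)$ by $\mathcal{A}((s_i)_{i\in\Z})=s_0$, and let $F(x,p)=(\sigma x,\mathcal{A}(x)p)$ be the suspension. Any $F$-invariant probability measure supported over a $\sigma$-periodic orbit of period $L$ in $\Sigma$ is supported on the $M$-orbits of the corresponding element $w=s_{i_L}\cdots s_{i_1}\in\Gamma$, and the fiber cocycle over this orbit is conjugate to the iteration of $w$. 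By Theorem~\ref{main2}(2), $w$ preserves a Riemannian metric, so $\sup_k\|Dw^k\|<\infty$ and the top fiber Lyapunov exponent equals $\lim_k\frac{1}{kL}\log\|Dw^k\|=0$.

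For the existence of an $F$-invariant $\mu$ with positive top fiber Lyapunov exponent $\chi^+(\mu)$, I would apply Theorem~\ref{main2}(1) to obtain $\lambda>0$ and $w_n\in\Gamma$ with $L_n:=|w_n|_S\to\infty$ and $\|Dw_n\|\ge e^{\lambda L_n}$; pick $p_n\in M$ with $\|D_{p_n}w_n\|\ge e^{\lambda L_n}$. On the compact system $(\Sigma\times M,F)$, the continuous potential $\phi_n(x,p):=\log\|D_p\mathcal{A}(x,n)\|$ is subadditive with respect to $F$ and satisfies $\sup_{(x,p)}\phi_{L_n}(x,p)\ge\lambda L_n$, so by subadditivity $\lim_n\frac{1}{n}\sup\phi_n\ge\lambda$. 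I would then invoke the variational principle for subadditive cocycles (in the spirit of Morris's theorem for matrix semigroups, or Cao--Feng--Huang's thermodynamic formalism for subadditive potentials), adapted to the derivative cocycle $D\mathcal{A}$ over $F$: the supremum of $\chi^+(\mu)$ over $F$-invariant probability measures $\mu$ equals $\lim_n\frac{1}{n}\sup\phi_n$ and is attained by upper semi-continuity of $\chi^+$ on the compact convex set of invariant probability measures. This produces an ergodic $F$-invariant $\mu$ with $\chi^+(\mu)\ge\lambda>0$.

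The main obstacle is rigorously justifying this variational principle in the needed form: the derivative cocycle $D\mathcal{A}$ depends on the $M$-coordinate and is not locally constant as a matrix cocycle, so classical Berger--Wang and Morris results for locally constant matrix semigroups do not apply verbatim. If a clean citation is unavailable, the fallback is to build $\mu$ by hand. Take $y^{\ast}\in\Sigma$ as a concatenation $\ldots w_{n_{k-1}}v_{k-1}w_{n_k}v_k\ldots$ of the $w_n$'s with short correction elements $v_k\in\Gamma$ chosen so that the top singular direction of $Dw_{n_{k+1}}$ at $p_{n_{k+1}}$ is aligned with the image under $v_k\circ w_{n_k}$ of the top singular direction at $p_{n_k}$; with the $L_{n_k}$ growing sufficiently fast, this forces $\|D_{p^{\ast}}\mathcal{A}(y^{\ast},T_k)\|\ge e^{cT_k}$ along the cumulative times $T_k$, and a Krylov--Bogolyubov limit of empirical measures along the orbit of $(y^{\ast},p^{\ast})$ is $F$-invariant and has positive top fiber Lyapunov exponent.
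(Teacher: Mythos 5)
Your proposal is correct and is exactly the argument the paper leaves implicit when it calls Theorem \ref{main} an ``immediate consequence'' of Theorem \ref{main2}: the canonical locally constant cocycle over the full shift on $S^{\Z}$, periodic orbits corresponding to elements of $\Gamma$ (whose invariant metrics force all fiber exponents over periodic measures to vanish --- note you should invoke the bound on $\|Dw^{k}\|$ for all $k\in\Z$, which by the paper's convention includes inverses, to kill \emph{every} fiber exponent and not just the top one), and the exponential growth of derivatives producing a measure with positive top exponent. The one thing to flag is that your ``main obstacle'' is not actually an obstacle: the subadditive variational principle $\lim_n\frac{1}{n}\sup\phi_n=\sup_{\mu}\chi^{+}(\mu)$ (Schreiber, Sturman--Stark, Cao--Feng--Huang) holds for any continuous subadditive potential over a continuous map of a compact metric space and requires no local constancy of the derivative cocycle, so your hand-built fallback measure is unnecessary.
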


Additionally, we will construct in Section \ref{remarks} new examples of cocycles in the group of bounded operators of Hilbert spaces not satisfying the periodic approximation property defined in \cite{KalSad}, the examples are a consequence of the existence of Burnside groups. 

\subsection{Acknowledgements:} I want to thank Alejandro Kocsard and Clark Butler for helpful comments and their interest in this work.
\end{section}

\begin{section}{The group}

Let $M = \mathbb{S}^3\times \mathbb{S}^1$, the group $\Gamma \subset \diff(M)$ in Theorem \ref{main} is given as follows: Let $\Gamma^{*} = \langle a_1,a_2,...,a_m \rangle$ be a subgroup of $SU(2)$ which is free. As $SU(2)$ is simple, the generic $m$-tuple of elements generates a free subgroup.\\

For every $1\leq i \leq 4$, let  $f^i_t$ be four one-parameter subgroups of $\diff(\mathbb{S}^1)$ with disjoint support (and so the $f^i_t$'s commute) such that  each $f_t^i$ has exactly one fixed hyperbolic 
repelling point $p_i$ satisfying $(f_i^1)'(p_i) = 2$. Define $F$ to be the abelian group homomorphism  $F: \R^4 \to \diff(\mathbb{S}^1)$ given by $F(x,y,z,u) = f_x^1f_y^2f_z^3f_u^4$.\\

Consider the subgroup $\Gamma = \langle A_1, A_2,...,A_m \rangle$ of $\diff(M)$ generated by the diffeomorphisms  
$A_i: \mathbb{S}^3\times \mathbb{S}^1  \to \mathbb{S}^3\times \mathbb{S}^1$ given by $$A_i(v,s) = (a_i(v), F(v)s)$$

Observe that $$A_i^{-1}(v,s) = (a_i^{-1}(v), F(-a_i^{-1}(v))s)$$\\

We will begin by proving that an element $W$ in $\Gamma$ acts isometrically, as mentioned before this follows from the following proposition by averaging an arbitrary metric on $M$ by $W$.

\begin{proposition}Let $W$ be any element of $\Gamma$. There is a constant $C_{W}$  such that $$\|D(W^n)\| \leq C_{W}$$ for every $n \in \Z$.

\end{proposition}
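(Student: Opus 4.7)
The plan is to find an explicit formula for $W^n$ that reduces the claim to a spectral property of $SU(2)$. By induction on word length, every $W \in \Gamma$ can be written as
\[ W(v, s) \,=\, \bigl(w(v),\, F(L_W v)(s)\bigr), \]
where $w \in \Gamma^*$ is the word obtained from $W$ by replacing each $A_i^{\pm 1}$ by $a_i^{\pm 1}$, and $L_W : \mathbb{R}^4 \to \mathbb{R}^4$ is linear. The base cases $L_{A_i} = I$ and $L_{A_i^{-1}} = -a_i^{-1}$ are direct, and the abelian homomorphism property of $F$ yields the composition rule $L_{W_1 W_2} = L_{W_1} w_2 + L_{W_2}$. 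Iterating $W^n = W \cdot W^{n-1}$ gives
\[ W^n(v, s) \,=\, \bigl(w^n(v),\, F(L_W M_n v)(s)\bigr),\qquad M_n \,:=\, I + w + w^2 + \cdots + w^{n-1}. \]

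The crux is to bound $M_n$ uniformly in $n$. For this I would first note that $W \neq \id$ forces $w \neq \id$: the homomorphism $\Gamma \to \Gamma^*$, $A_i \mapsto a_i$, is an isomorphism because any reduction of a word to the identity in the free group $\Gamma^*$ proceeds by $a_i a_i^{-1}$-cancellations, and these hold identically in $\Gamma$. Now every non-identity element of $SU(2)$ is conjugate to $\diag(e^{i\theta}, e^{-i\theta})$ with $\theta \in (0, \pi]$, so the eigenvalues of $w$ acting on $\mathbb{R}^4 \cong \mathbb{C}^2$ are $e^{\pm i\theta}$ (each of multiplicity two), and $1$ is never an eigenvalue. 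Hence $I - w$ is invertible and the telescoping identity $(I - w) M_n = I - w^n$ gives $\|M_n\| \leq 2\|(I - w)^{-1}\|$, uniformly in $n \geq 0$.

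With $M_n$ bounded, the estimate on $\|D W^n\|$ follows from three standard observations: $w^n$ acts by isometries on $\mathbb{S}^3$; the argument $L_W M_n v$ of $F$ lies in a fixed compact subset of $\mathbb{R}^4$ as $n$ and $v \in \mathbb{S}^3$ vary; and on any bounded set the maps $F(x)$ are uniformly smooth, with the fiber derivative $\partial_s F(x)(s)$ bounded and the base derivative $\partial_{x_i} F(x)(s) = X_i(F(x)(s))$ bounded as well, where $X_i$ is the smooth vector field on the compact manifold $\mathbb{S}^1$ generating $f^i_t$. Applying the same argument to $W^{-1}$ handles $n < 0$, and $W = \id$ is trivial.

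I do not foresee a serious obstacle: the argument rests on two elementary facts, namely that $F$ is a homomorphism out of the abelian group $\mathbb{R}^4$ and that $SU(2)$ has no non-identity element fixing a non-zero vector in $\mathbb{R}^4$. The only somewhat delicate point is verifying that $L_W$ is genuinely linear across compositions involving inverses, which the composition formula handles cleanly once the signs from $L_{A_i^{-1}} = -a_i^{-1}$ are tracked correctly.
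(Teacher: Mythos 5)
Your proof is correct and follows essentially the same route as the paper: derive an explicit formula $W^n(v,s)=(w^n(v),F(\cdot)s)$ and bound the Birkhoff-type sum $I+w+\cdots+w^{n-1}$ using the fact that $1$ is not an eigenvalue of a non-identity element of $SU(2)$ acting on $\mathbb{R}^4$ (the paper does this by eigenvector decomposition, you by the equivalent telescoping identity with $(I-w)^{-1}$). Your $L_W$ formalism just makes explicit the case of negative powers, which the paper dismisses with ``can be worked out similarly.''
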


\begin{proof}
For sake of simplicity assume that $W = A_{i_{l}}A_{i_{l-1}}....A_{i_2}A_{i_1}$ (all the occurrences of $A_{i}$'s are positive), 
the case where negative powers appear can be worked out similarly. 
Let $w =: a_{i_{l}}a_{i_{l-1}}....a_{i_1}$ be the corresponding word in the $a_i$'s and let $a_{i_0} = \text{Id}$. An easy calculation shows that: 
$$W^n(v,s) = (w^n(v),  F(v_{n,w})s) $$  where $$v_{n,w} = \sum_{j=0}^{l-1} \sum_{k=0}^{n-1} a_{i_j}a_{i_{j-1}}...a_{i_0}w^k(v)$$

As $w_n$ is an isometry, to show that $\|D(W^n)\|$ is bounded independent of $n$ it is enough to show that $F(v_{n,w})$ lies 
in a compact subset of $\diff(\mathbb{S}^1)$, this is a consequence of the following fact:

\begin{proposition} For any $w \neq e \in \Gamma^{*}$  there is a constant $C_{w} > 0$ such that: $$\|\sum_{k=0}^{n-1} w^k(v)\| \leq C_{w}$$ for every $v \in \mathbb{S}^3$ and every $n \in \Z$.
\end{proposition}

\begin{proof}
It is enough to prove this when $v$ is an eigenvector of $w$. In this case if $\lambda$ is the corresponding eigenvalue we have $\sum_{k=0}^{n-1} w^k(v) = (\sum^{n-1}_{k=0} \lambda^k)v $. This is bounded independent of $n$ because $\lambda \neq 1$ and $|\lambda| =1$.
\end{proof}

\end{proof}

To finish the proof of Theorem \ref{main2} we need to show that the group $\Gamma$ itself has exponential growth of derivatives, more concretely:

\begin{proposition} There exists $\e>0$ and  a sequence of words $\{W_n\} \subset \Gamma$ in the generators $\{A_1, A_2,..., A_n\}$ such that each $W_n$ has length $n$ and $\|D(W_n)\| \geq e^{\e n}$.
\end{proposition}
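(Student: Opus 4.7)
The plan is to reduce the statement to a linear-algebra estimate and then construct length-$n$ words whose prefix products concentrate near the identity in $SU(2)$. Writing $g_j := a_{i_j}\cdots a_{i_1}$ and $M_W := \sum_{j=0}^{l-1} g_j \in \R^{4\times 4}$, the same computation as in Proposition 2.2 gives $v_W(v) = M_W v$. Because $(f^i_t)'(p_i) = 2^t$ and the $f^i$ have pairwise disjoint supports, evaluating $DW$ at the point $(v, p_i)$ shows that its $\mathbb{S}^1$-component derivative is exactly $2^{(M_W v)_i}$, and hence
$$\|DW\|\;\geq\; 2^{|(M_W v)_i|}\qquad (v \in \mathbb{S}^3,\ i \in \{1,2,3,4\}).$$
Maximizing over $v$ and $i$ and using equivalence of $\ell^2$ and $\ell^\infty$ norms on $\R^4$ then gives $\|DW\| \geq 2^{\|M_W\|_{op}/2}$. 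It therefore suffices to construct $W_n$ of length $n$ with $\|M_{W_n}\|_{op} \geq cn$ for some fixed $c>0$.

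To make such a construction possible I would exploit the freedom left in the paper's setup, since the only requirement on $\Gamma^*$ is that a generic $m$-tuple generate a free subgroup. My intention is to take $m \geq 4$ and set $a_i = \exp(\delta X_i)$ for a small $\delta>0$ and unit vectors $X_i \in \mathfrak{su}(2)$ chosen so that the signed collection $\{\pm X_i\}$ is a fine net of the unit sphere in $\mathfrak{su}(2)$. Because the $m$-tuples generating a free subgroup are comeager in $SU(2)^m$, an arbitrarily small generic perturbation of the $X_i$ retains both the closeness and net properties while ensuring $\Gamma^*$ is free.

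I would then build $W_n$ inductively so that every prefix $g_j$ remains in a fixed ball $B_r(\id)\subset SU(2)$ of radius $r = O(\delta)$. Given a reduced $W_k$ with $g_k \in B_r(\id)$, I identify the unit direction $Y \in \mathfrak{su}(2)$ that points (via left translation) from $g_k$ back toward $\id$, and extend by the letter $a_{i_{k+1}}^{\pm 1}$ whose infinitesimal direction $\pm X_{i_{k+1}}$ best approximates $Y$ among the $2m-1$ choices compatible with the reduced-word rule. A short Baker-Campbell-Hausdorff estimate shows that, when the net is fine enough, $d(g_{k+1},\id) \leq d(g_k,\id)-\Omega(\delta)$ whenever $d(g_k,\id) > 2\delta$, and otherwise $d(g_{k+1},\id) \leq 3\delta$. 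Iterating, all prefixes stay in $B_r(\id)$ with $r = 3\delta$.

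Each $g_j \in B_r(\id)$ satisfies $\|g_j - I\|_{op} \leq r$, so $\|M_{W_n}-nI\|_{op} \leq nr$ and $\|M_{W_n}\|_{op} \geq n(1-r) \geq n/2$ for small enough $\delta$. The first paragraph then yields $\|DW_n\| \geq 2^{n/4} = e^{\epsilon n}$ with $\epsilon = (\log 2)/4 > 0$. The main obstacle is reconciling the greedy ``inward-pointing'' step with the reduced-word restriction: this is precisely what forces $\{\pm X_i\}$ to be a sufficiently dense net of the unit sphere of $\mathfrak{su}(2)$, so that excluding one forbidden direction still leaves a near-optimal alternative.
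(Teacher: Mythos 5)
Your argument is correct and follows the same overall strategy as the paper --- exploit the freedom to choose the generating set $S\subset SU(2)$, and then produce reduced words all of whose prefix products $g_j$ keep the Birkhoff-type sum $\sum_j g_j(v_0)$ drifting linearly, so that $F$ evaluated at that sum has derivative $2^{\Omega(n)}$ at the repelling point --- but the key lemma is implemented differently. The paper fixes a single vector $v_0\in\mathbb{S}^3$ and a radius $\delta$, covers the annulus $B_\delta\setminus B_{\delta/2}$ by finitely many group elements that push points back into $B_{\delta/2}$, and plays ping-pong so that only the orbit $w_j(v_0)$ is controlled; this is a soft compactness argument. You instead take generators $\exp(\delta X_i)$ near the identity with $\{\pm X_i\}$ a fine net of the unit sphere of $\mathfrak{su}(2)$, and run a greedy descent (with a BCH estimate) keeping the prefix products themselves in a ball $B_{O(\delta)}(\id)\subset SU(2)$. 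Your conclusion is strictly stronger ($\|M_{W_n}-nI\|_{op}\leq rn$ controls all of $M_{W_n}$, not just the orbit of one vector), at the cost of two extra technical points that you correctly identify but should be written out: (i) the net must remain adequate after deleting the one direction forbidden by the reduced-word condition, which a doubled or sufficiently fine net handles; and (ii) the descent constants need minor care --- when $d(g_k,\id)\leq 2\delta$ the next prefix lands in $B_{3\delta+O(\delta^2)}(\id)$, so the invariant radius should be taken slightly larger than $3\delta$. Neither is a real gap, and the genericity perturbation to ensure freeness is the same device the paper uses. One could argue your version is more robust (it does not depend on the choice of $v_0$), while the paper's covering trick avoids any Lie-theoretic estimates.
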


\begin{proof}

The proof is based in the following elementary fact:

\begin{proposition}\label{trick} Given $v_0 \in \mathbb{S}^3$ and $\delta>0$, there exists $S= \{a_1,a_2....,a_m\} \subset SU(2)$  generating a free subgroup and  an 
infinite sequence $a_{i_1} a_{i_2}...a_{i_n}...$ in $S$ such that the words $w_1:= a_{i_1}$, $w_n := a_{i_n}w_{n-1}$ satisfy: $$\|w_n(v_0) - v_0\| \leq \delta$$ for every $n \geq 1$.
\end{proposition}

\begin{proof}

For $r > 0$, let $B_r$ be the ball in $\R^4$ with center $v_0$ and radius $r$. Consider the annulus $A_{\delta} := B_{\delta} - B_{\delta/2}$.  By a compactness argument,  there is a finite set of elements $S' \subset SU(2)$ such that for any point $p \in A$ there is $g \in S'$ such that $g(p) \in B_{\delta/2}$. Take an element $h$ of $SU(2)$  such that $h(B_{\delta/2}) \subset  B_{\delta}$. We can take $S'$ and $h$ generic so that $S :=S' \cup \{h\}$ generates a free group.\\

Our words can be constructed as follows:  Apply $h$ repeatedly to the vector $v_0$ until $h^k(v_0) \in A_{\delta}$, then apply the corresponding element in $S'$ sending back $h^k(v_0)$ inside  $B_{\delta/2}$. Iterate this procedure of going back and forth between $B_{\delta/2}$ and $A_{\delta}$ using $h$ and $S'$.

\end{proof}

Using Proposition \ref{trick} with $v_0= (1,0,0,0)$ and $\delta = \frac{1}{2}$, we obtain a generating set $S= \{a_1,a_2...a_m\}$ and words $w_n$ in $S$ for every $n\geq 1$. Let $W_n$ be the corresponding word obtained by replacing $a_i$'s by $A_i$'s, we have that:

$$W_n(v_0,s) = (w_n(v_0),  F(v_{w_n})s) $$ 
where $$v_{w_n} :=  \sum_{i=0}^{n-1} w_i(v_0)$$ 

And so one has that $\|v_{w_n} - nv_0\| \leq \frac{n}{2}$ which implies that $\|D_{p_1}(F(v_{w_n}))\| \geq |(f_1^1)'(p_1)|^{\frac{n}{2}}$ where $p_1$ is the repelling point of $f^1_t$.

\end{proof}

\end{section}

\begin{section} {Further consequences and remarks}\label{remarks}

\subsection{Banach Cocycles} From the previous construction one can obtain new examples of Banach cocycles where the periodic approximation property defined in \cite{KalSad} fail. These examples are obtained by looking at Banach spaces where $\diff(M)$ acts naturally.\\

More elementary examples of Banach cocycles where the periodic approximation property defined in \cite{KalSad} fail can be constructed from the existence of Burnside groups as follows:\\

Let $G$ be a Burnside group (an infinite group which is finitely generated and such that every element of $G$ has finite order), let $S = \{a_1,a_2,...a_m\}$ be a generating set of $G$. Consider the infinite dimensional vector space spanned by formal finite sums of elements in $G$: $$X = \{ \sum c_i g_i , \text{ where } c_i \in \R, g_i \in G \}$$ Define an inner product on $X$ by setting $\|g_i\| = 2^{|g_i|_S}$ and $\langle g_i, g_j \rangle = 0$ if  $g_i\neq g_j$. Complete $X$ to a Hilbert space $\mathcal{H}$ using this norm. There is a natural action $\alpha: G \to B(\mathcal{H})$ given by left multiplication on $X$ and one easily checks that $\|\alpha(g)\|_{B(\mathcal{H})} = 2^{|g|_S}$. The corresponding cocycle $\mathcal{A}$ over the shift in $S^{\Z}$ satisfy $\chi_{\text{top}} = \log(2)$ but  $\chi_{\mu} = 0$ for any measure $\mu$ supported over a periodic orbit.

\subsection{Open questions} 

The question whether the periodic approximation property above holds for $\diff(M)$ cocycles when $dim(M) = 2$ remains open. A positive solution will have strong consequences for actions of groups on surfaces, for example it will imply there is no Burnside group in $\diff(\mathbb{S}^2)$ by using the techniques developed in \cite{Hur}.

\end{section}

\end{document}